\documentclass[reqno,a4paper,12pt]{amsart} 

\usepackage{amsmath,amscd,amsfonts,amssymb}
\usepackage{mathrsfs,dsfont}

\usepackage{bbm}

\allowdisplaybreaks

\numberwithin{equation}{section}
\numberwithin{figure}{section}

\renewcommand\le{\leqslant}
\renewcommand\ge{\geqslant}
\renewcommand\leq{\leqslant}
\renewcommand\geq{\geqslant}

\theoremstyle{plain}
\newtheorem{thm}{Theorem}[section]

\newtheorem{lemma}[thm]{Lemma}
\newtheorem{cor}[thm]{Corollary}

\newtheorem*{claim*}{Claim}
\newtheorem{theorem}{Theorem}

\theoremstyle{definition}

\newtheorem*{definition*}{Definition}
\newtheorem*{remarks*}{Remarks}
\newtheorem*{remark*}{Remark}

\begin{document}
	
	\title
	[Norm bounds on Fourier series with polynomial spectra.]{Norm bounds on Fourier series with polynomial spectra and constrained coefficients.}
	
	\author{Ioann Vasilyev}
	\address{
		St. Petersburg Department of Steklov Mathematical Institute, Fontanka 27, St. Petersburg 191023, Russia}
	\email{ivasilyev@pdmi.ras.ru}

	\subjclass[2020]{42A32, 42A55}
	\keywords{Polynomial Fourier spectrum,  $\Lambda(p)$ sets, Uncertainty Principle in Harmonic Analysis}
	\thanks{The author is supported by the Foundation for the Advancement of Theoretical Physics and Mathematics ``BASIS''. \\ This work was supported by the Ministry of Science and Higher Education of the Russian Federation (agreement 075-15-2025-344 dated 29/04/2025 for Saint Petersburg Leonhard Euler International Mathematical Institute at PDMI RAS)}

	\begin{abstract}
		The  goal of this paper is to prove an upper bound for the  $L^4$ norm of a trigonometric polynomial whose spectrum is a nontrivial strictly monotone polynomial with integer coefficients of degree three and higher, via its $L^2$ norm. Our condition on the coefficients of the trigonometric polynomial in question is that they form a complex sequence whose modulus is decreasing. Second, we obtain a similar result in the case where the spectrum is formed by perfect squares, under a more strict condition on the coefficients.
		Our results strengthen and complement those by S. Bochkarev and A. Córdoba. We also give an answer to Conjecture 4.5 of Eceizabarrena and Da Rocha~\cite{EceiDaR24} and determine the sharp order of growth of the $L^4$ norm in this conjecture.
	\end{abstract}

	\maketitle

\section{Introduction}

Let \(\mathbb T=\mathbb R/(2\pi\mathbb Z)\) denote the unit circle which we identify with the interval \([0,2\pi)\), equipped with the normalized Lebesgue measure. We begin by recalling a theorem by A. C\'ordoba,~\cite{Cordoba24}. 

\begin{theorem}
\label{cordoba} 
Let 
\[ G(\theta)=\sum_{k=1}^N b_k e^{ik^2\theta},\] 
where \(\theta\in\mathbb T\), \(N\in\mathbb N\), \(b_k\in\mathbb R_+\), and let the sequence \((b_k)_{k=1}^N\) be decreasing. Then it holds that
\[ \|G\|_{L^q(\mathbb T)}\lesssim_q \|G\|_{L^2(\mathbb T)} 
\] 
for all \(2<q<4\).
\end{theorem}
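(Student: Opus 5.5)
The plan is to combine a Littlewood--Paley decomposition along \emph{lacunary} blocks of the index \(k\) with Abel summation inside each block. The whole argument then reduces to a single estimate on quadratic Weyl sums,
\[
\Big\|\sum_{k=1}^{n} e^{ik^2\theta}\Big\|_{L^q(\mathbb T)}\lesssim_q n^{1/2},\qquad 2<q<4. \tag{$\ast$}
\]

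\emph{Step 1 (decomposition).} For \(j\ge 0\) set \(I_j=[2^j,2^{j+1})\cap[1,N]\) and \(G_j(\theta)=\sum_{k\in I_j} b_k e^{ik^2\theta}\). The spectrum of \(G_j\) lies in \([4^j,4^{j+1})\), which is a lacunary (dyadic-type) partition of the frequencies. The Littlewood--Paley theorem therefore gives
\[
\|G\|_q\lesssim_q \Big\|\Big(\sum_j|G_j|^2\Big)^{1/2}\Big\|_q .
\]
Since \(q\ge 2\), Minkowski's inequality in \(L^{q/2}\) yields
\[
\|G\|_q\lesssim_q \Big(\sum_j\|G_j\|_q^2\Big)^{1/2}.
\]

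\emph{Step 2 (Abel summation in a block).} On \(I_j\) write \(T_{m}(\theta)=\sum_{2^j\le k\le m}e^{ik^2\theta}\). Summation by parts gives
\[
G_j=\sum_{m\in I_j}(b_m-b_{m+1})T_m+b_{\max I_j+1}T_{\max I_j},
\]
where the boundary term is adjusted so that the identity is exact. All the weights are nonnegative because \((b_k)\) is decreasing and positive, and they sum to \(b_{2^j}\). Each \(T_m\) is a difference of two full sums of length \(\le 2^{j+1}\), so \((\ast)\) gives \(\|T_m\|_q\lesssim 2^{j/2}\). Hence
\[
\|G_j\|_q\lesssim_q b_{2^j}\,2^{j/2}.
\]
For \(j\ge1\), monotonicity gives
\[
b_{2^j}^2\,2^{j}\le 2\sum_{2^{j-1}<k\le 2^j}b_k^2 .
\]
Summing over \(j\) we obtain
\[
\|G\|_q\lesssim_q \Big(b_1^2+\sum_k b_k^2\Big)^{1/2}\lesssim \|G\|_{L^2},
\]
as claimed.

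\emph{Step 3 (the Weyl-sum bound \((\ast)\)).} This is the main analytic input, and the only place where \(q<4\) is used. At \(q=4\) one only has \(\|S_n\|_4^4\asymp n^2\log n\), so interpolating between \(L^2\) and \(L^4\) would lose a logarithm; the proof must use the major-arc structure directly.

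By Dirichlet's theorem, every \(\theta\) satisfies \(|\theta-2\pi a/r|\le 2\pi/(rn)\) for some \(r\le n\) and \((a,r)=1\). On such an arc, the standard Gauss-sum/Weyl estimate is
\[
|S_n(\theta)|\lesssim \frac{n}{\sqrt r}\,\big(1+n^2|\theta-2\pi a/r|\big)^{-1/2}
\]
(with the usual modification near the endpoints of the arc, handled by the van der Corput approximate functional equation). Integrating the \(q\)-th power over the arc, the factor \((1+n^2|t|)^{-q/2}\) is integrable in \(t\) with total mass \(\lesssim n^{-2}\), because \(q/2>1\). Summing over the \(r\) admissible values of \(a\) and then over \(r\le n\) gives
\[
\|S_n\|_q^q\lesssim \sum_{r\le n} r\cdot n^{q}r^{-q/2}\,n^{-2}=n^{q-2}\sum_{r\le n}r^{1-q/2}\lesssim n^{q-2}\,n^{2-q/2}=n^{q/2}.
\]
The final step uses \(1-q/2>-1\), i.e.\ \(q<4\). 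This gives \((\ast)\).

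I expect the main care to go into justifying the major-arc bound uniformly over all \(\theta\), in particular on the ``minor'' portions where \(r\) is close to \(n\). Uniformity in the block endpoints is not an additional issue, since every \(T_m\) was written as a difference of two full sums from \(1\).
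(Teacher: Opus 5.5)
The paper does not prove this statement. It is quoted from C\'ordoba~\cite{Cordoba24}, so there is no proof in the paper to compare against.

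Your argument is correct. Its skeleton is the one the paper uses for Theorem~\ref{higherpowers}: dyadic blocks, Abel summation inside each block with nonnegative weights controlled by the first coefficient of the block, Minkowski in $L^{q/2}$, and the comparison $2^j b_{2^j}^2\le 2\sum_{2^{j-1}<k\le 2^j}b_k^2$.

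The real difference is the exponential-sum input.
\begin{itemize}
\item Lemma~\ref{keylem} bounds partial sums with arbitrary unimodular twists $\varepsilon(k)$ through an additive-energy count, which ignores phases. That is what lets Theorem~\ref{higherpowers} allow complex coefficients with monotone moduli.
\item For squares the energy is $\asymp n^2\log n$, so the paper's route loses a logarithm. Moreover, a twist-uniform version of $(\ast)$ would already be a special case of Rudin's $\Lambda(q)$ conjecture for the squares.
\item You instead use $b_k\ge 0$ to reduce everything to the single untwisted Weyl sum, where major-arc asymptotics are available. This is why the statement is restricted to nonnegative coefficients, while the paper's degree-$\ge 3$ theorem is not.
\end{itemize}

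One step should be cited rather than sketched: the pointwise bound $|S_n(\theta)|\lesssim n r^{-1/2}\bigl(1+n^2|\theta-2\pi a/r|\bigr)^{-1/2}$ on Dirichlet arcs. It is true in exactly this log-free form; it is the estimate Bourgain uses for periodic Strichartz estimates. The log-free form matters because your own computation shows the $L^q$ integral is dominated by arcs with $r\asymp n$, where the bound is only of size $\sqrt n$. Any logarithmic loss at the $\sqrt n$ level, such as the one from Weyl differencing or from crude completion of incomplete Gauss sums, would reappear as a power of $\log n$ in $(\ast)$.
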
 

 Here and everywhere below notation $X\lesssim_\alpha Y$ means that $X\le C\cdot Y$ for some absolute constant $C > 0$ that might depend only on a parameter $\alpha$. We write $X\asymp_\alpha Y$ once we have  $X\lesssim_\alpha Y$ and $X\gtrsim_\alpha Y$ simultaneously.

The results considered in this paper belong to the circle of questions around \(\Lambda(p)\)-sets and Fourier series with sparse spectra. Recall that a set of integers \(\Lambda\) is called a \(\Lambda(p)\)-set if trigonometric polynomials with spectrum in \(\Lambda\) satisfy an \(L^p\)-estimate in terms of their \(L^2\)-norm. In this language, C\'ordoba's theorem may be viewed as a coefficient-restricted \(\Lambda(p)\)-type estimate for the square spectrum.

This result is sharp in the sense that, in general, one cannot take \(q=4\); see Rudin,~\cite{Rudin60}. The main thing we do in this paper is we prove the following analogue for polynomial phases of degree at least three. 
\begin{theorem}
\label{higherpowers} 
Let \(P\in\mathbb Z[x]\) be a nonconstant strictly monotone polynomial of degree \(d\ge3\). Let \[ F(\theta)=\sum_{k=1}^N a_k e^{iP(k)\theta}, \] 
where \(\theta\in\mathbb T\), \(N\in\mathbb N\), \(a_k\in\mathbb C\), and let the sequence \((|a_k|)_{k=1}^N\) be decreasing. Then it holds that \[ \|F\|_{L^4(\mathbb T)} \lesssim_P \|F\|_{L^2(\mathbb T)}. \]
\end{theorem}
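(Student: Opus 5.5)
The plan is to expand the fourth power and reduce to counting solutions of $P(k_1)+P(k_2)=P(k_3)+P(k_4)$, weighted by the coefficients. By Parseval,
\[
\|F\|_{L^4}^4=\sum_{\substack{k_1,k_2,k_3,k_4\le N\\ P(k_1)+P(k_2)=P(k_3)+P(k_4)}} a_{k_1}a_{k_2}\overline{a_{k_3}a_{k_4}} \le \sum_{P(k_1)+P(k_2)=P(k_3)+P(k_4)} |a_{k_1}||a_{k_2}||a_{k_3}||a_{k_4}|,
\]
and $\|F\|_{L^2}^2=\sum|a_k|^2$ because $P$ is injective (it is strictly monotone). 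The diagonal solutions $\{k_1,k_2\}=\{k_3,k_4\}$ contribute at most $2(\sum|a_k|^2)^2$. So it remains to bound the off-diagonal sum $S$ by $C_P(\sum|a_k|^2)^2$.

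For the off-diagonal solutions I will order the variables as $k_1\ge k_2$, $k_3\ge k_4$, $k_1> k_3\ge k_4>k_2$. The case $k_1=k_3$ forces $k_2=k_4$ by injectivity, which is diagonal. Since $|a_k|$ is decreasing, $|a_{k_2}|$ is the largest weight and $|a_{k_1}|$ the smallest among the four. I will use
\[
|a_{k_1}||a_{k_2}||a_{k_3}||a_{k_4}|\le |a_{k_2}|^2|a_{k_3}||a_{k_4}| \quad\text{or}\quad \le |a_{k_2}||a_{k_4}|\,|a_{k_3}|^2,
\]
choosing whichever form suits the arithmetic information. The key arithmetic input is that $P(k_1)-P(k_3)=P(k_4)-P(k_2)$ with $k_1>k_3\ge k_4>k_2$. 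Writing $h=k_1-k_3$, the left side is at least $c_P\,h\,k_3^{d-1}$ for large arguments, while the right side is at most $C_P(k_4-k_2)k_4^{d-1}$. Given $k_3,k_4$ and $m=k_4-k_2$, the number of admissible $k_1$ is $O(1)$, by monotonicity. This also forces $h\,k_3^{d-1}\lesssim m\,k_4^{d-1}\le m k_3^{d-1}$, so $h\lesssim m\le k_4$. Moreover, for $d\ge3$, since $P(k_1)-P(k_3)\sim d\,h\,k_3^{d-1}$, I will use a second-order expansion to show that $k_3$ cannot be much larger than $k_4$ unless $h$ is small, and in fact that $k_1\lesssim_P k_4$ whenever $k_2$ is small relative to $k_4$. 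More generally I expect the solution count of the form $\#\{(k_1,k_3): k_1,k_3\sim K\}$, with $k_2,k_4$ fixed, to be $O(1)$. The standard fact that $P(x)+P(y)=n$ has $O_\varepsilon(n^\varepsilon)$ solutions is too weak, so I will instead exploit the curvature $d\ge3$ directly: for fixed $(k_2,k_4)$, the equation $P(k_1)-P(k_3)=D$ with $D=P(k_4)-P(k_2)\lesssim k_4^{d}$ has at most $O(1)$ solutions with $k_3\gg k_4$. The reason is that $h\ge1$ forces $k_3^{d-1}\lesssim D\lesssim k_4^{d}$, so $k_3\lesssim k_4^{d/(d-1)}$, and a divisor-type count over $h$ gives the required sparsity.

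Summation then reduces to a Hilbert/Hardy-type inequality. I will bound $S\lesssim\sum_{k_2<k_4}|a_{k_2}||a_{k_4}|\,\Sigma(k_2,k_4)$, where
\[
\Sigma(k_2,k_4)=\sum_{(k_1,k_3)}|a_{k_1}||a_{k_3}|\le |a_{k_4}|^2\,\#\{(k_1,k_3)\}\ \text{ or }\ \le \sum_{j\ge k_4}|a_j|^2\cdot(\text{multiplicity}).
\]
The monotonicity gives $|a_{k_4}|^2\le \frac1{k_4}\sum_{j\le k_4}|a_j|^2$, and $\sum_{k_2<k_4}|a_{k_2}|\le \sqrt{k_4}\,(\sum|a_j|^2)^{1/2}$. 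Combining these with the multiplicity bound and a dyadic decomposition in $k_4$ should close the estimate. For this to work the multiplicity must decay like $k_4^{-\delta}$ in the averaged sense.

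The main obstacle is the counting step: bounding, uniformly and with enough decay, the number of $(k_1,k_3)$ with $P(k_1)-P(k_3)=P(k_4)-P(k_2)$, averaged over $k_2<k_4$. For squares this multiplicity is divisor-like and unbounded on average, which is why the exponent $q=4$ fails there. For $d\ge3$ I would first try a Hooley/Greaves-type bound: the number of non-diagonal solutions with all variables $\le K$ is $O(K^{2-\delta})$ for some $\delta=\delta_P>0$, via the factorization $P(x)-P(y)=(x-y)Q(x,y)$ and bounding in terms of $h=x-y$. I would then convert this dyadic count into the weighted bound using the decreasing weights, through a Cauchy--Schwarz over dyadic blocks, where the gain $K^{-\delta}$ makes the dyadic series summable.
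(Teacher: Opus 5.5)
There is a genuine gap: the off-diagonal step, which is the whole theorem, is not established, and the claims you make there are false. The setup is fine. The weighted sum $\sum|a_{k_1}a_{k_2}a_{k_3}a_{k_4}|$ over solutions of $P(k_1)+P(k_2)=P(k_3)+P(k_4)$ is exactly $\|\sum_k|a_k|e^{iP(k)\theta}\|_{L^4}^4$. So taking absolute values costs nothing, since the theorem for the sequence $(|a_k|)$ is equivalent to your bound, and the diagonal is trivial.

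\emph{The multiplicity claims.} For fixed $(k_2,k_4)$, $h=k_1-k_3$ divides $D=P(k_4)-P(k_2)$ and, for $k_3$ large, determines $k_3$. So the honest multiplicity is $\lesssim_P\tau(|D|)\lesssim_\varepsilon k_4^{\varepsilon}$, not $O(1)$. No second-order expansion forces $k_1\lesssim_P k_4$ when $k_2$ is small: size considerations only give $k_3\lesssim k_4^{d/(d-1)}$, and such solutions do occur. For $P(x)=x^3$ one has $11^3+93^3=30^3+92^3$. Moreover, rational points of this cubic surface are dense in its real points, so there are off-diagonal solutions with $k_2\le k_4/2$ and $k_1/k_4$ arbitrarily large. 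This matters: a uniform $O(1)$ multiplicity is exactly what would make your chain $S\lesssim\sum_{k_2<k_4}|a_{k_2}||a_{k_4}|^3\,\#\{(k_1,k_3)\}$ close (dyadically in $k_4$ it gives $\lesssim(\sum|a_k|^2)^2$). With $k_4^{\varepsilon}$ instead, you lose a factor $N^{\varepsilon}$.

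\emph{The power-saving input.} The factorization $P(x)-P(y)=(x-y)Q(x,y)$ plus a divisor bound only yields $O(K^{2+\varepsilon})$ off-diagonal solutions in $[1,K]^4$. A power saving $O(K^{2-\delta})$ is a deep theorem (Hooley, Greaves, Heath-Brown, Browning) and must be imported, not derived this way.

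\emph{Even with $O(K^{2-\delta})$, the box count plus Cauchy--Schwarz does not close.} Take a block $k_1\asymp k_3\sim K$, $k_2\sim K_2$, $k_4\sim K_4$ with $K_2\le K_4\lesssim K$. Write $\gamma_K=\sum_{K/2\le k<K}|a_k|^2$ and $B=\sum_k|a_k|^2$. Monotonicity only gives $|a_{k_1}a_{k_3}|\le 2\gamma_K/K$ and $|a_{k_2}a_{k_4}|\le B/\sqrt{K_2K_4}$. Inserting the box count then gives $\gamma_K B\,K^{1-\delta}/\sqrt{K_2K_4}$, which is unbounded when $K_2K_4$ is small.

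\emph{How to repair it.} You need a count that sees $K_2$ and $K_4$, namely the divisor count $\lesssim K^{\varepsilon}K_2K_4$. With $X=\sqrt{K_2K_4}$ the block is then $\lesssim\gamma_K B\min(K^{1-\delta}/X,\,K^{\varepsilon}X/K)\le\gamma_K B\,K^{-(\delta-\varepsilon)/2}$. This survives the $O(\log^2K)$ choices of $(K_2,K_4)$ and sums over $K$ to $\lesssim B^2$. Organize by the top scale $K$, not by $k_4$: bounding $|a_{k_1}a_{k_3}|$ by $|a_{k_4}|^2$ throws away exactly the decay you need when $k_3\gg k_4$.

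\emph{Comparison with the paper.} Repaired this way, your route needs a strictly stronger arithmetic input than the paper does. The paper proves only the flat estimate $\|\sum_{k\le M}\varepsilon(k)e^{iP(k)\theta}\|_{L^4}\lesssim_P M^{1/2}$, i.e.\ $\sum_n r_{P+P,M}(n)^2\lesssim_P M^2$, with no power saving. It then transfers this to monotone coefficients without loss. Abel summation inside each Littlewood--Paley block $2^j\le P(k)<2^{j+1}$, against partial sums started at $k=1$, gives $\|\Delta_jF\|_{L^4}\lesssim_P 2^{j/(2d)}|a_{u_j}|$. Then $\|F\|_{L^4}^2\lesssim\sum_j\|\Delta_jF\|_{L^4}^2\lesssim_P\sum_j 2^{j/d}|a_{u_j}|^2\lesssim_P\sum_k|a_k|^2$.
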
 

In other words, thanks to the Parseval theorem, we wish to prove the following inequality
\begin{equation}
\label{main} 
\left\|\sum_{k=1}^N a_k e^{iP(k)\theta}\right\|_{L^4(\mathbb T)} \lesssim_P \left(\sum_{k=1}^N |a_k|^2\right)^{1/2}.
\end{equation}

Let us also mention the relation of our result with one result by Bochkarev, see~\cite{Bochkarev14}. For the pure power spectra \(k^d\), \(d\ge3\), Bochkarev obtained \(L^1\)-lower estimates for Fourier sums under a local regularity assumption on the moduli of the coefficients. In particular, his result applies to unimodular coefficients. Theorem~\ref{higherpowers} gives, for polynomial spectra of degree at least three and monotone moduli, an \(L^4\)-upper estimate, from which the corresponding \(L^1\)-lower estimate follows by interpolation. 

We also recall that another part of Bochkarev's work concerns Littlewood--Paley descriptions of the space \(\mathrm{BMO}\). This direction was developed further in \cite{TselishchevVasilyev20}, where more general Littlewood--Paley  characterizations of \(\mathrm{BMO}\) and Triebel--Lizorkin spaces were proved. Here we will only use the classical Littlewood--Paley inequality as one of our principal tools.

The assumption that \(P\) is strictly monotone in Theorem~\ref{higherpowers} is purely technical. Since a general \(P\) is eventually monotone one could derive the general case from Theorem~\ref{higherpowers},  after changing the constant in the bound~\eqref{main} if necessary.

It is natural to expect that the results of Theorem~\ref{higherpowers} can be extended to a wider class of coefficients, namely weighted sequences of bounded variation. However, we prefer no to pursue this direction here.

\bigskip

Here is our second main result of this paper.
\begin{theorem}
\label{bochkarev} 
Let 
\[ f(\theta)=\sum_{k=1}^N d_k e^{ik^2\theta},\] 
where \(\theta\in\mathbb T\), \(N\in\mathbb N\), \(d_k\in\mathbb C\), and suppose that \(|d_k|\asymp k^{-1/2}\). Then it holds that
\[ \|f\|_{L^4(\mathbb T)}\lesssim \|f\|_{L^2(\mathbb T)}.
\] 
\end{theorem}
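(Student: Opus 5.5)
The plan is to compute both sides directly, using the explicit size of the coefficients, and to reduce the $L^4$ bound to counting solutions of $k^2+l^2=k'^2+l'^2$. By Parseval and $|d_k|\asymp k^{-1/2}$,
\[
\|f\|_{L^2(\mathbb T)}^2=\sum_{k=1}^N|d_k|^2\asymp \log N .
\]
(For $N=1$ the claim is trivial, so take $N\ge2$.) It therefore suffices to prove $\|f\|_{L^4(\mathbb T)}^4\lesssim (\log N)^2$.

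\textbf{Step 1: Littlewood--Paley reduction.} Decompose $f=\sum_{j\ge0} f_j$, where
\[
f_j(\theta)=\sum_{2^j\le k<2^{j+1}} d_k e^{ik^2\theta}.
\]
The spectrum of $f_j$ lies in $[4^j,4^{j+1})$, and these are lacunary blocks. By the classical Littlewood--Paley inequality,
\[
\|f\|_4^4\lesssim \Bigl\|\Bigl(\sum_j|f_j|^2\Bigr)^{1/2}\Bigr\|_4^4=\sum_{i,j}\int_{\mathbb T}|f_i|^2|f_j|^2 .
\]
The double sum has $O((\log N)^2)$ pairs. So it suffices to show $\int|f_if_j|^2\lesssim 1$ for $i\ne j$, and $\int|f_j|^4\lesssim 1+j$ on the diagonal, since $\sum_{j\lesssim\log N} j\lesssim(\log N)^2$. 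The off-diagonal terms will in fact carry a decaying factor, which only helps.

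\textbf{Step 2: counting.} Expanding, and using only $|d_k|\lesssim k^{-1/2}$,
\[
\int|f_if_j|^2\le\sum |d_kd_ld_{k'}d_{l'}|\lesssim 2^{-(i+j)}\,\#\{k^2+l^2=k'^2+l'^2\},
\]
where $k,k'\sim2^i$ and $l,l'\sim2^j$.

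For $i=j$, set $M=2^j$. The number of solutions with all variables $\le 2M$ equals $\sum_{n\lesssim M^2} r_2(n)^2\lesssim M^2\log M$. This gives $\int|f_j|^4\lesssim j+1$.

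For $i<j$, the diagonal solutions ($k=k'$, $l=l'$) number $\lesssim 2^{i+j}$ and contribute $O(1)$. For the off-diagonal solutions, write $m=k^2-k'^2=(l'-l)(l'+l)$. Then $0<|m|\lesssim 4^i$, and each $m$ determines $(l,l')$ in at most $d(|m|)$ ways. Summing over the $\lesssim4^i$ pairs $(k,k')$, the average-divisor bound for $|k^2-k'^2|=|k-k'|(k+k')$ gives $\lesssim 4^i\,i$ solutions. The weighted contribution is then $\lesssim i\,2^{i-j}$.

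The constraint $l'+l\sim 2^j$ also forces $|m|\gtrsim 2^j$, so these solutions exist only when $j\le 2i+O(1)$. Either way, $\sum_j\sum_{i<j} i\,2^{i-j}\lesssim\sum_j j\lesssim(\log N)^2$.

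\textbf{Main obstacle.} The delicate point is the off-diagonal divisor count: we need $\sum_{k\ne k'\sim2^i} d(|k^2-k'^2|)\lesssim 4^i\, i$ with no $2^{\varepsilon i}$ loss. A crude $d(m)\lesssim m^\varepsilon$ bound would destroy the $(\log N)^2$ target when $i$ is close to $j$.

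To get this, I would use the factorization $d(ab)\le d(a)d(b)$ with $a=|k-k'|$ and $b=k+k'$, substitute $u=|k-k'|$ and $v=k+k'$, and apply $\sum_{u\le X}d(u)\lesssim X\log X$ in each variable. This yields $4^i i^2$. One logarithm too many is still acceptable, because the factor $2^{i-j}$ decays and the constraint $j\le 2i+O(1)$ holds: it gives $\sum_{i<j\le 2i}i^22^{i-j}\lesssim\sum_i i^2\cdot 2^{-i}\cdot$\dots; more carefully, $\sum_{j}\sum_{j/2\le i<j}i^2 2^{i-j}\lesssim\sum_j j^2$. That is too big, so the sharper single-logarithm bound is genuinely needed.

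For the sharper bound, I would count directly: the number of triples $(k,k',l,l')$ with $l'^2-l^2=k^2-k'^2$ is at most the number of representations of each $m$ as $(l'-l)(l'+l)$ with $l'+l\sim2^j$. Since $l'+l\sim 2^j$ is large, the cofactor $l'-l=m/(l'+l)\lesssim 4^i/2^j$ is small. Fixing $l'-l=t$ and $l$, the equation $k^2-k'^2=t(2l+t)$ has $\lesssim d(t(2l+t))$ solutions. Summing this over $t\lesssim4^{i-j}\cdot 2^{i}$ and $l\sim 2^j$ is where I would expect the single logarithm to emerge.
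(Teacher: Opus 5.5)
\textbf{There is a genuine gap at the decisive step.} Your Littlewood--Paley reduction, the weight bookkeeping, the block-diagonal term $i=j$ (via $\sum_{n\le X}r_2(n)^2\lesssim X\log X$) and the trivial solutions $k=k'$ are all fine. Everything then rests on the off-diagonal count for $i<j$: the number of $(k,k',l,l')$ with $k\neq k'$ in $[2^i,2^{i+1})$, $l,l'\in[2^j,2^{j+1})$ and $k^2+l^2=k'^2+l'^2$ must be $\lesssim 4^i(1+i)$. You never prove this.

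Worse, the route you commit to cannot prove it, because the claimed bound $\sum_{k\ne k'\sim 2^i}d(|k^2-k'^2|)\lesssim 4^i i$ is false. Put $u=k-k'$ and $v=k+k'$. Every odd $u\le 2^{i-2}$ and odd $v\in(5\cdot 2^{i-1},7\cdot 2^{i-1})$ give admissible $k,k'$. For $\gcd(u,v)=1$ one has $d(uv)=d(u)d(v)$, and a standard mean-value computation then gives a sum $\gtrsim 4^i i^2$. So the second logarithm coming from $d(uv)\le d(u)d(v)$ is real, and, as you computed yourself, it leads to $(\log N)^3$.

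The loss happens at ``each $m$ determines $(l,l')$ in at most $d(|m|)$ ways''. This discards the constraint $l+l'\in[2^{j+1},2^{j+2})$, and a typical $m$ has $O(1)$ divisors in a prescribed dyadic range, not $d(m)$. Your last paragraph makes the mirror-image mistake. Bounding the number of $(k,k')$ by $d(t(2l+t))$ discards $k+k'\in[2^{i+1},2^{i+2})$, and $\sum_{t\lesssim 4^i/2^j}\sum_{s\sim 2^j}d(ts)$ is again of order $4^i(1+2i-j)\,j$, i.e.\ $4^ij^2$ when $i=j-1$. (Also, the correct range is $t\lesssim 4^i/2^j$, not $4^{i-j}2^i$.)

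\textbf{The missing idea is to keep both dyadic localizations at once.} Take $k>k'$ and set $t=l'-l$, $s=l'+l$. Then $uv=ts$ and $t\asymp u2^{i-j}$. Put $g=\gcd(u,t)$. Since $u/g$ and $t/g$ are coprime, this forces $v=(t/g)w$ and $s=(u/g)w$. Hence each pair $(u,t)$ carries at most $2^{i+2}g/t$ solutions. Moreover $\sum_{t\asymp u2^{i-j}}\gcd(u,t)/t\lesssim d(u)$. Together these give an off-diagonal count $\lesssim 2^i\sum_{u<2^i}d(u)\lesssim 4^i(1+i)$, which is exactly what your bookkeeping needs. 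With that lemma, your Littlewood--Paley route becomes a complete and rather more elementary alternative to the paper's argument.

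\textbf{How the paper does it.} The paper does not use Littlewood--Paley here. It bounds $\|f\|_4^4$ by $\sum_m W(m)^2$ and localizes $m\sim M$, $ab\sim U$, $cd\sim V$. It then counts off-diagonal solutions through the triple $(a,h,q)$ with $h=c-a$, $q=b-d$ and $q\mid h(h+2a)$. It averages the number $\rho_{a^2}(q)$ of square roots of $a^2$ modulo $q$ via Lapkova's formula, which gives $E_{U,V}(M)\lesssim \mathbf{1}_{U=V}U+UV\log(2M)/M$. There too, it is the dyadic localization of $a$ and $q$ that replaces the full divisor count.
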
 

In particular, Theorem~\ref{bochkarev} answers Conjecture 4.5 of Eceizabarrena and Da Rocha~\cite{EceiDaR24} and determines the sharp order of growth in the critical $L^4$ case.

As a direct corollary of Theorem~\ref{bochkarev}, we obtain the following inequality.
\begin{cor}
The following estimate holds for any arrangement of signs  
\[
\sqrt{\log(N)}\lesssim \int_0^{2\pi} \biggl|\sum_{n=1}^N\pm\frac{1}{\sqrt{n}}e^{in^2x}\biggr|dx.
\]
\end{cor}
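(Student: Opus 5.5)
The corollary will come from Theorem~\ref{bochkarev} combined with the standard interpolation (Hölder) inequality between $L^1$, $L^2$ and $L^4$. Write $f(x)=\sum_{n=1}^N \pm n^{-1/2}e^{in^2x}$. Since $|d_n|=n^{-1/2}$ exactly, the hypothesis $|d_n|\asymp n^{-1/2}$ of Theorem~\ref{bochkarev} holds with absolute constants, uniformly over all sign choices. Therefore
\[
\|f\|_{L^4(\mathbb T)}\lesssim \|f\|_{L^2(\mathbb T)}
\]
with an absolute constant.

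The argument then runs as follows.

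First, by Parseval (the frequencies $n^2$ are distinct),
\[
\|f\|_{L^2(\mathbb T)}^2=\sum_{n=1}^N \frac1n\asymp \log N \qquad (N\ge2).
\]

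Second, apply Hölder's inequality in the form $\|f\|_2\le \|f\|_1^{1/3}\|f\|_4^{2/3}$. This follows from $|f|^2=|f|^{2/3}|f|^{4/3}$ with exponents $3$ and $3/2$:
\[
\int|f|^2\le \Big(\int |f|\Big)^{2/3}\Big(\int|f|^4\Big)^{1/3},
\]
that is, $\|f\|_2^2\le \|f\|_1^{2/3}\|f\|_4^{4/3}$, which gives $\|f\|_2^3\le\|f\|_1\|f\|_4^2$.

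Third, insert the bound from Theorem~\ref{bochkarev}:
\[
\|f\|_2^3\le \|f\|_1\,\|f\|_4^2\lesssim \|f\|_1\,\|f\|_2^2,
\]
hence $\|f\|_1\gtrsim\|f\|_2\asymp\sqrt{\log N}$. The normalized measure on $\mathbb T$ and the integral $\int_0^{2\pi}$ differ only by the factor $2\pi$, which is absorbed into the implied constant.

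There is no real obstacle here: all the difficulty sits in Theorem~\ref{bochkarev}. The only points to check are that the implied constant there is independent of $N$ and of the signs, and that the case $N=1$ (where $\log N=0$) is trivial.
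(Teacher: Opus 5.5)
Your proposal is correct and follows the route the paper intends when it calls this a direct corollary of Theorem~\ref{bochkarev} and says the $L^1$ lower bound follows ``by interpolation''. Theorem~\ref{bochkarev} gives $\|f\|_{L^4}\lesssim\|f\|_{L^2}$ uniformly in the signs, H\"older gives $\|f\|_2^3\le\|f\|_1\|f\|_4^2$, and Parseval gives $\|f\|_2^2=\sum_{n\le N}1/n\asymp\log N$, which together yield $\|f\|_1\gtrsim\sqrt{\log N}$.
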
 

Let us also point out that our estimates are naturally related to the uncertainty principle in harmonic analysis. Indeed, they express the fact that a function whose Fourier transform is supported on a sufficiently arithmetically sparse set cannot have its mass concentrated too strongly in physical space. In the present paper this phenomenon is quantified through \(L^4\)-bounds and the resulting \(L^1\)-lower estimates for polynomial spectra.

\section{Proof of Theorem~\ref{higherpowers}}
Replacing \(P\) by \(-P\), if necessary, we may assume that the leading coefficient of \(P\) is positive. Thus, for notational simplicity, after adding a constant to \(P\) if necessary we may assume that this polynomial is strictly increasing and positive on \(\mathbb N\).

 We first prove an auxiliary result on exponential sums. 
\begin{lemma}
\label{keylem} 
Let \(M\in \mathbb N\) and denote for \(\theta\in \mathbb T\)
\[ H_M(\theta)=\sum_{k=1}^M \varepsilon(k)e^{iP(k)\theta}, \] where \(\varepsilon:\{1,\ldots,M\}\to \{z\in \mathbb C: |z|=1\}\). Then \[ \|H_M\|_{L^4(\mathbb T)}\lesssim_P M^{1/2}. \] 
\end{lemma}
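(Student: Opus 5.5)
Since the frequencies \(P(k)\) are integers, I will expand the fourth power and use orthogonality on \(\mathbb T\):
\[
\|H_M\|_{L^4(\mathbb T)}^4=\sum_{\substack{k_1,k_2,k_3,k_4\le M\\ P(k_1)+P(k_2)=P(k_3)+P(k_4)}}\varepsilon(k_1)\varepsilon(k_2)\overline{\varepsilon(k_3)\varepsilon(k_4)} .
\]
Bounding each term by \(1\) in modulus, it suffices to show that the number \(E(M)\) of solutions of \(P(k_1)+P(k_2)=P(k_3)+P(k_4)\) with \(1\le k_i\le M\) satisfies \(E(M)\lesssim_P M^2\). This is an additive energy bound for \(\{P(k)\}_{k\le M}\), and the unimodularity of \(\varepsilon\) plays no further role.

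The diagonal solutions \(\{k_1,k_2\}=\{k_3,k_4\}\) contribute at most \(2M^2\). For the rest, I rewrite the equation as \(P(k_1)-P(k_3)=P(k_4)-P(k_2)\). Since \(P(x)-P(y)=(x-y)Q(x,y)\) with \(Q\in\mathbb Z[x,y]\) of degree \(d-1\), I will parametrize by \(h=k_1-k_3\ne0\), \(h'=k_4-k_2\ne0\). Monotonicity together with \(d\ge3\) should give \(|P(k+h)-P(k)|\asymp_P |h|\,(k+|h|)^{d-1}\) for \(k\ge1\) (up to finitely many small \(k\), absorbed into the constant). Hence \(P(k_1)-P(k_3)=m\) forces \(|h|\) and the position of \(k\) to be tightly related to \(m\). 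For a fixed \(m\), let \(r(m)\) be the number of pairs \((k,h)\) with \(P(k+h)-P(k)=m\); then \(E(M)=\sum_m r(m)^2\). For fixed \(h\), the map \(k\mapsto P(k+h)-P(k)\) is a polynomial of degree \(d-1\ge2\) in \(k\), eventually strictly monotone, so each \(h\) gives \(O_P(1)\) values of \(k\) per \(m\). The task therefore reduces to counting the pairs \(h\neq h'\) for which the two curves \(P(k+h)-P(k)\) and \(P(k'+h')-P(k')\) meet at an integer point.

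The main obstacle is exactly this step: it is the known difficulty of bounding the additive energy of polynomial values by \(O(M^2)\). For \(d\ge3\), what I would try first is the classical Hooley/Greaves-type count, or the elementary argument of the kind used for cubes: fix \(h\) and \(h'\) and note that the equation \(P(k+h)-P(k)=P(k'+h')-P(k')\) defines a curve of degree \(d-1\) in \((k,k')\). By Bombieri--Pila, its number of integer points in a box of size \(M\) is \(O(M^{1/(d-1)+\epsilon})\), but summing this over \(M^2\) pairs \((h,h')\) is too lossy. So I would instead use the size constraint: \(|h|\,(k+|h|)^{d-1}\asymp |h'|\,(k'+|h'|)^{d-1}\). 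This shows that for fixed \((k,h)\) the admissible \(h'\) lie in a dyadic range, and for each \(h'\) there is at most \(O(1)\) value of \(k'\). Summing via a divisor-type argument, using that \(h\mid m\) and \(h'\mid m\), I would aim for \(\sum_m r(m)^2\lesssim M^2\), possibly first obtaining \(M^{2+\epsilon}\) and then refining. If only \(M^{2+\epsilon}\) is attainable, I would fall back on citing the known result (Hooley for cubes, Wooley/Vaughan-type bounds for general \(P\) of degree \(\ge3\)) that the number of nondiagonal solutions is \(o(M^2)\), which is exactly what the lemma needs.
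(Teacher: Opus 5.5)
Your argument is the paper's: bounding each term by $1$ reduces $\|H_M\|_{L^4(\mathbb T)}^4$ to the number $E(M)=\sum_n r_{P+P,M}(n)^2$ of solutions of $P(k_1)+P(k_2)=P(k_3)+P(k_4)$ in $[1,M]^4$, and the bound $E(M)\lesssim_P M^2$ is then taken from the literature. The paper cites Browning, whose results cover general polynomials $P$ of degree $d\ge3$; that is the reference you need, rather than results for pure powers.

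Your elementary sketch in between does not close on its own, so the citation is genuinely needed, exactly as in the paper. For fixed $m$, the admissible $h'$ are divisors of $m$ spread over many dyadic scales, not confined to a single dyadic block. Divisor bounds alone give only $E(M)\lesssim_\epsilon M^{2+\epsilon}$.
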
 
\begin{proof} 
For an integer \(n\), denote
 \[ r_{P+P,M}(n) = \#\{(u,v)\in\{1,\ldots,M\}^2:\ P(u)+P(v)=n\}. 
\] 
Write \[ H_M(\theta)^2=\sum_{n\in \mathbb Z} c_n e^{in\theta}, \] where \[ c_n= \sum_{\substack{1\le u,v\le M\\ P(u)+P(v)=n}} \varepsilon(u)\varepsilon(v), \]
with the usual convention that \(c_n=0\) if the equation \(n=P(u)+P(v)\) has no solutions.

By the Parseval theorem, we know that \[ \|H_M\|_{L^4(\mathbb T)}^4 = \|H_M^2\|_{L^2(\mathbb T)}^2 = \sum_{n\in \mathbb Z} |c_n|^2. \] Since \(|c_n|\le r_{P+P,M}(n)\), we obtain \[ \|H_M\|_{L^4(\mathbb T)}^4 \le \sum_{n\in \mathbb Z} r_{P+P,M}(n)^2. \] By the results of  Browning~\cite{Browning15},~\cite{Browning24} in degree three and higher, 
\[ \sum_{n\in \mathbb Z} r_{P+P,M}(n)^2\lesssim_P M^2. \] 
Therefore \[ \|H_M\|_{L^4(\mathbb T)}^4\lesssim_P M^2, \] and the lemma follows. 
\end{proof} 

We now return to the proof of Theorem~\ref{higherpowers}. 

\begin{proof} 
 Write $a_k=\varrho_k\varepsilon(k), \varrho_k=|a_k|$, and $|\varepsilon(k)|=1,$  where the sequence \((\varrho_k)_{k=1}^N\) is decreasing. 

For every integer \(j\geq 0\), denote 
\[ I_j:=\{k\in\{1,\ldots,N\}:\ 2^j\le P(k)<2^{j+1}\}. \] 
Notice that whenever \(I_j\neq\emptyset\), it is an interval, so we may write as follows \[ I_j=\{u_j,u_j+1,\ldots,v_j\}, \] 
with certain natural numbers \(u_j\) and \(v_j\). Notice that since \(P(k)\asymp_P k^d\), we have, for all relevant \(j\), the following properties
$u_j\asymp_P 2^{j/d}, v_j\asymp_P 2^{j/d},$ and $u_j-u_{j-1}\asymp_P 2^{j/d}.$ Define the corresponding Littlewood--Paley blocks by the formula 
\[ \Delta_jF(\theta) = \sum_{k\in I_j} a_k e^{iP(k)\theta}, \]
if \(I_j\) is nonvoid and by \(\Delta_jF=0\) identically otherwise. Then we obviously have
\[ F=\sum_{j=0}^\infty \Delta_jF. \]

Consider an auxiliary exponential sum 
\[ S_m(\theta)=\sum_{k=1}^m \varepsilon(k)e^{iP(k)\theta}. \]
By the Abel summation formula, we get
 \[ \begin{split} \Delta_jF &= \sum_{k=u_j}^{v_j}\varrho_k(S_k-S_{k-1})\\ &= \varrho_{v_j}S_{v_j} -\varrho_{u_j}S_{u_j-1} + \sum_{k=u_j}^{v_j-1}(\varrho_k-\varrho_{k+1})S_k, \end{split} \] 
and therefore, Lemma~\ref{keylem} yields the following bound 
\[ 
\begin{split} 
\|\Delta_jF\|_{L^4(\mathbb T)} &\le \varrho_{v_j}\|S_{v_j}\|_{L^4(\mathbb T)} +\varrho_{u_j}\|S_{u_j-1}\|_{L^4(\mathbb T)} +\sum_{k=u_j}^{v_j-1}(\varrho_k-\varrho_{k+1})\|S_k\|_{L^4(\mathbb T)}\\ &\lesssim_P v_j^{\frac{1}{2}} \left( \varrho_{v_j} +\varrho_{u_j} +\sum_{k=u_j}^{v_j-1}(\varrho_k-\varrho_{k+1}) \right). 
\end{split} 
\] 
Notice that in the line just above we have used the fact $\varrho_k\geq 0$ and that the sequence \((\varrho_k)_{k=1}^N\) is decreasing.

By telescoping, we also infer the following property 
\[ \varrho_{v_j} +\varrho_{u_j} +\sum_{k=u_j}^{v_j-1}(\varrho_k-\varrho_{k+1}) = 2\varrho_{u_j}, \] 
from where we deduce that 
\[ \|\Delta_jF\|_{L^4(\mathbb T)} \lesssim_P v_j^{1/2}\varrho_{u_j} \lesssim_P 2^{j/(2d)}\varrho_{u_j}. \] 
Using first the Littlewood--Paley inequality on \(\mathbb T\) and then Minkowski inequality we also get the following estimates
\[ \|F\|_{L^4(\mathbb T)}^2 \lesssim  \left\|\biggl(\sum_{j=0}^\infty|\Delta_jF|^2\biggr)^{\frac{1}{2}}\right\|_{L^4(\mathbb T)}^2 \le \sum_{j=0}^\infty \|\Delta_jF\|_{L^4(\mathbb T)}^2. \] 
Consequently, 
\[ \|F\|_{L^4(\mathbb T)}^2 \lesssim_P \sum_{j=0}^\infty 2^{\frac{j}{d}}\varrho_{u_j}^2. \] 

It remains to compare the last expression with the \(L^2\)-norm of the function \(F\). Since the sequence \((\varrho_k)_{k=1}^N\) is decreasing and since \(u_j-u_{j-1}\asymp_P 2^{j/d}\), we infer the following estimates 
\[ \sum_{k=u_{j-1}}^{u_j-1}\varrho_k^2 \ge (u_j-u_{j-1})\varrho_{u_j}^2 \gtrsim_P 2^{\frac{j}{d}}\varrho_{u_j}^2. \] 
Summing in \(j\), we get the bound
\[ \sum_{j=1}^\infty 2^{\frac{j}{d}}\varrho_{u_j}^2 \lesssim_P \sum_{k=1}^N \varrho_k^2, \] 
which permits us to conclude that
\[ \|F\|_{L^4(\mathbb T)}^2 \lesssim_P \sum_{k=1}^N |a_k|^2. \] 
Since the frequencies \(P(k)\) are distinct we also know, by the Parseval theorem, that 
\[ \|F\|_{L^2(\mathbb T)}^2 = \sum_{k=1}^N |a_k|^2, \] 
which finishes off the proof of Theorem~\ref{higherpowers}. 
\end{proof}

 \section{Proof of Theorem~\ref{bochkarev}}

Since $n^{-1/2}\lesssim |d_n|$, we know that 
$$
\|f\|_2^2=\sum_{n\leq N} |d_n|^2 \gtrsim \log(N).
$$

On the other hand, denote
\[
W(m):=\sum_{\substack{m=a^2+b^2\\ a,b\geq 1}}
\frac{1}{\sqrt{ab}}.
\]
As in Theorem~\ref{higherpowers} we infer the property
$$
\|f\|_4^4=\sum_{m=1}^{2N^2} \biggl|\sum_{\substack{m=a^2+b^2\\ 1\leq a,b\leq N}} d_a d_b \biggr|^2 \lesssim \sum_{m=1}^{2N^2} W^2(m).
$$
Thus, we need to prove that
\[
\sum_{1\leq m\leq 2N^2}W^2(m)\lesssim \log^2(N).
\]

We first decompose the range of $m$ into dyadic intervals satisfying $2^j\leq m<2^{j+1}$ with $j\in \mathbb N$. Thus, it is sufficient to prove that, for every $M=2^j\leq 2N^2$, holds
\[
\Sigma_M:=\sum_{M\leq m<2M}W^2(m)\lesssim \log(2M).
\]
Indeed, if this is true, then
\[
\sum_{1\leq m\leq 2N^2}W^2(m)
\lesssim
\sum_{j=0}^{\lceil \log_2(2N^2)\rceil}\log(2^{j+1})
\lesssim \log^2(N).
\]

For $U=2^k$ and $V=2^\ell$ with $k$ and $\ell$ natural, denote
\[
r_U(m):=\#\{(a,b):a^2+b^2=m,\ U<ab\leq 2U\},
\]
and put
\[
E_{U,V}(M):=
\sum_{M\leq m<2M}r_U(m)r_V(m).
\]
Since
\[
W(m)\lesssim
\sum_{k=0}^{\lceil\log_2(2M)\rceil}
\frac{r_{2^k}(m)}{2^{k/2}},
\]
we obtain
\[
\Sigma_M
\lesssim
\sum_{k,\ell=0}^{\lceil\log_2(2M)\rceil}
\frac{E_{2^k,2^\ell}(M)}{2^{(k+\ell)/2}}.
\]

Notice that it suffices to prove the following bound
\[
E_{U,V}(M)
\lesssim
{\bf 1}_{U=V}U+
\frac{UV\log(2M)}{M}.
\tag{1}
\]
Indeed, inequality (1) gives
\[
\begin{split}
\Sigma_M
&\lesssim
\sum_{k=0}^{\lceil\log_2(2M)\rceil}1
+
\frac{\log(2M)}{M}
\sum_{k,\ell=0}^{\lceil\log_2(2M)\rceil}
2^{(k+\ell)/2}
\\
&=
\sum_{k=0}^{\lceil\log_2(2M)\rceil}1
+
\frac{\log(2M)}{M}
\left(
\sum_{k=0}^{\lceil\log_2(2M)\rceil}2^{k/2}
\right)^2
\\
&\lesssim
\log(2M).
\end{split}
\]

So we concentrate on $E_{U,V}(M)$, which is nothing but the
number of solutions $(a,b,c,d)$ of the equation
\[
a^2+b^2=c^2+d^2
\]
such that $a^2+b^2\in[M,2M), ab\in(U,2U],$ and $cd\in(V,2V]$.

The diagonal solutions are easy to handle. Indeed, after interchanging
$a$ and $b$ if necessary, we can suppose that $a\leq b$. Then $b\asymp \sqrt{M}$ and $a\asymp \frac{U}{\sqrt{M}}.$
Hence the number of possible pairs $(a,b)$ is bounded from above, up to a multiplicative constant by
\[
\frac{U}{\sqrt M}\sqrt M
\lesssim U.
\]
Thus, the diagonal contribution is bounded by ${\bf 1}_{U=V}U.$

We now consider the off-diagonal solutions. By symmetry, we can
suppose that $U\leq V, a\leq b,$ and $c\leq d.$
We consider the case $c>a$, so that necessarily $b>d$; the other
case is treated analogously. Put $h:=c-a$, and $q:=b-d.$
Then
\[
(c+a)h=(b+d)q.
\tag{2}
\]

Denote $A:=\frac{U}{\sqrt M},$ and $C:=\frac{V}{\sqrt M}.$
We thus have $a\asymp A, b\asymp\sqrt M, c\asymp C,$ and $d\asymp\sqrt M.$ We decompose the possible values of $h$ into the dyadic intervals defined by $2^s<h\leq 2^{s+1},$ for $s=0,1,\ldots .$
Fix such an $s$ and put $H:=2^s.$
It follows from (2) that
\[
q
=
\frac{h(c+a)}{b+d}
\asymp
\frac{HC}{\sqrt M}
=
\frac{HV}{M}.
\]
Put
\[
Q:=\frac{HV}{M}.
\tag{3}
\]
Thus, $q\asymp Q.$
Since $V\lesssim M$, we also have
\[
Q\lesssim H.
\tag{4}
\]

Since $c=a+h,$
we have $c^2-a^2=h(h+2a),$
and therefore
\[
q\mid h(h+2a).
\tag{5}
\]
For $k\geq 0$, denote
\[
\rho_k(q):=
\#\{x\pmod q:x^2\equiv k\pmod q\}.
\]
After the change of variables
\[
x=h+a,
\]
condition (5) becomes $x^2\equiv a^2\pmod q.$
Hence, for fixed $a$ and $q$, there are exactly $\rho_{a^2}(q)$
admissible residue classes for $h$ modulo $q$. Consequently, the
number of $h$ in an interval of length $H$ satisfying (5) is at most
\[
\left(\frac{H}{q}+1\right)\rho_{a^2}(q).
\]
Recall inequality (4), which tells us that $q\lesssim H$, and hence
\[
\left(\frac{H}{q}+1\right)\rho_{a^2}(q)
\lesssim
\frac{H}{q}\rho_{a^2}(q).
\tag{6}
\]

Moreover, a fixed triple $(a,h,q)$ determines the original solution
in at most one way. Indeed, $c=a+h$ while
$b-d=q$ and, by (2), we have $b+d=\frac{h(h+2a)}{q}.$
Therefore
\[
b=
\frac12\left(
\frac{h(h+2a)}q+q
\right),
\qquad
d=
\frac12\left(
\frac{h(h+2a)}q-q
\right).
\]

Let $S_H$ denote the number of off-diagonal solutions corresponding
to $H<h\leq 2H.$ By properties (3)--(6),
\[
S_H
\lesssim
H
\sum_{a\asymp A}
\sum_{q\asymp Q}
\frac{\rho_{a^2}(q)}q.
\tag{7}
\]

We now estimate the inner sum. Write $a=2^t\Omega,$ where $\Omega$ is odd, and put
\[
P_a(X):=
\sum_{q\leq X}\rho_{a^2}(q).
\]
By Lemma 4 of the paper~\cite{Lapkova17} by  K. Lapkova (see also formula (3.11) therein) we know that
\[
P_a(X)
=
\sum_{d\mid\Omega}
\sum_{\substack{\alpha\geq0\\2^\alpha d^2\leq X}}
\kappa_\alpha d
\sum_{h\leq \frac{X}{2^\alpha d^2}}
\xi_d(h),
\tag{8}
\]
where
\[
\xi_d(n)
=
\sum_{\ell m=n}\mu^2(\ell)\chi_d(m),
\tag{9}
\]
and $\chi_d$ is the principal character modulo $2\Omega/d$, that is,
\[
\chi_d(m)=
\begin{cases}
1,&(m,2\Omega/d)=1,\\
0,&(m,2\Omega/d)>1.
\end{cases}
\]
Here, $\mu$ stands for the M\"obius function. The coefficients $\kappa_\alpha$ are nonnegative and satisfy
\[
\sum_{\alpha\geq0}\frac{\kappa_\alpha}{2^\alpha}=2.
\tag{10}
\]

Since $0\leq\chi_d(m)\leq1$, it follows from property (9) that
\[
\xi_d(n)
\leq
\sum_{\ell\mid n}\mu^2(\ell)
=
2^{\omega(n)},
\]
where $\omega(n)$ is the number of distinct prime divisors of $n$. Using
\[
2^{\omega(n)}
=
\sum_{\ell\mid n}\mu^2(\ell),
\]
we obtain
\[
\begin{split}
\sum_{h\leq Y}\xi_d(h)
&\leq
\sum_{h\leq Y}2^{\omega(h)}
\\
&=
\sum_{\ell\leq Y}
\mu^2(\ell)
\left\lfloor\frac{Y}{\ell}\right\rfloor
\\
&\lesssim
Y\log(2Y).
\end{split}
\tag{11}
\]

Thanks to lines (8)--(11), we obtain
\[
\begin{split}
P_a(X)
&\lesssim
\sum_{d\mid\Omega}
\sum_{\substack{\alpha\geq0\\2^\alpha d^2\leq X}}
\kappa_\alpha d
\frac{X\log(2X)}{2^\alpha d^2}
\\
&\lesssim
X\log(2X)
\biggl(\sum_{d\mid\Omega}\frac1d\biggr)
\biggl(\sum_{\alpha\geq0}
\frac{\kappa_\alpha}{2^\alpha}\biggr).
\end{split}
\]
Thus
\[
P_a(X)
\lesssim
X\log(2X)\sigma_{-1}(\Omega),
\tag{12}
\]
where
\[
\sigma_{-1}(\Omega):=
\sum_{d\mid\Omega}\frac1d.
\]

Split the range $q\asymp Q$ into a bounded number of intervals of the form
$Q'<q\leq2Q'$. The intervals with $Q'<1$ contain only a bounded number of
integers and are harmless. Thus, after relabelling $Q'$ as $Q$, it is enough
to consider $Q<q\leq2Q$, where $Q\geq1$. We now apply the Abel
summation formula. Put
\[
L:=\lfloor Q\rfloor,
\qquad
R:=\lfloor 2Q\rfloor.
\]
Since $P_a(q)-P_a(q-1)=\rho_{a^2}(q)$, we have
\[
\begin{split}
\sum_{Q<q\leq2Q}
\frac{\rho_{a^2}(q)}q
&=
\sum_{q=L+1}^{R}
\frac{P_a(q)-P_a(q-1)}q
\\
&=
\frac{P_a(R)}R
-
\frac{P_a(L)}{L+1}
+
\sum_{q=L+1}^{R-1}
P_a(q)
\left(\frac1q-\frac1{q+1}\right).
\end{split}
\]
Since $P_a(L)\geq0$ and
\[
\frac1q-\frac1{q+1}=\frac1{q(q+1)},
\]
inequality (12) gives
\[
\begin{split}
\sum_{Q<q\leq2Q}
\frac{\rho_{a^2}(q)}q
&\lesssim
\sigma_{-1}(\Omega)
\left(
\log(2R)
+
\sum_{q=L+1}^{R-1}
\frac{\log(2q)}{q+1}
\right)
\\
&\lesssim
\sigma_{-1}(\Omega)\log(2Q).
\end{split}
\tag{13}
\]
Here, in the last estimate, we used the facts that $L,R\asymp Q$ and
$$\sum_{q=L+1}^{R-1}\frac{1}{q+1} \lesssim 1.$$

It remains to average $\sigma_{-1}(\Omega)$ over $a\asymp A$.
Since $\Omega\mid a$, we know that $\sigma_{-1}(\Omega)\leq\sigma_{-1}(a),$ and therefore
\[
\begin{split}
\sum_{a\asymp A}\sigma_{-1}(\Omega)
&\leq
\sum_{A<a\leq 2A}
\sum_{d\mid a}\frac1d
\\
&=
\sum_{d\leq2A}\frac1d
\#\{A<a\leq2A:d\mid a\}.
\end{split}
\]
For every $d\leq2A$,
\[
\#\{A<a\leq2A:d\mid a\}
\lesssim
\frac{A}{d}.
\]
Hence
\[
\sum_{a\asymp A}\sigma_{-1}(\Omega)
\lesssim
A\sum_{d\leq2A}\frac1{d^2}
\lesssim A.
\tag{14}
\]

Combining properties (7), (13), and (14), we obtain
\[
S_H
\lesssim
AH\log(2Q)
\lesssim
AH\log(2M).
\tag{15}
\]
Finally, recall that $h=c-a\leq c\lesssim C,$
so that only the intervals satisfying
$H=2^s<h\leq2^{s+1}$ and $0\leq s\leq \lceil\log_2(2C)\rceil,$
can occur. Therefore, by property (15),
\[
\begin{split}
E_{U,V}^{\mathrm{off}}(M) \leq \sum_{s=0}^{\lceil\log_2(2C)\rceil} S_{2^s}
&\lesssim
A\log(2M)
\sum_{s=0}^{\lceil\log_2(2C)\rceil}2^s
\\
&\lesssim
AC\log(2M)=\frac{UV\log(2M)}{M}.
\end{split}
\]
Together with the diagonal contribution, this proves property (1), and hence also the inequality $\Sigma_M\lesssim\log(2M).$ Finally,
\[
\sum_{1\leq m\leq2N^2}W^2(m)
\lesssim
\sum_{j=0}^{\lceil\log_2(2N^2)\rceil}\log(2^{j+1})
\lesssim
\log^2(N),
\]
and we are done.

\section*{Acknowledgments}
The author is grateful to Sergey Kislyakov and to Mikhail Vasilyev for a number of helpful discussions. The author acknowledges the use of ChatGPT 5.6 Plus (OpenAI) as a research tool in developing and checking some technical parts of the proof of Theorem~\ref{bochkarev}. Namely: in the arithmetic estimates used in the argument, and in locating relevant references. The author has independently checked the resulting arguments and takes full responsibility for the mathematical content.

\end{document}